\titleformat{\subsubsection}[runin]
	{\normalfont\normalsize\bfseries\filcenter}{\thesubsubsection.}{1 ex}{}
\declaretheorem[within=section]{theorem}
\declaretheorem[sibling=theorem]{lemma}
\declaretheorem[sibling=theorem]{corollary}
\newcommand\RR{\mathbb{R}}
\newcommand{\defn}[1]{\emph{\color{blue} #1}} 
\newcommand\p{{\bf p}}
\newcommand\q{{\bf q}}
\begin{document}
\title{Universal rigidity on the line, point order}
\author{Bryan Chen\thanks{\texttt{bryangingechen@gmail.com}Partially supported by NSF grant PHY-1554887},
 Robert Connelly\thanks{Department of Mathematics, Cornell University. \texttt{rc46@cornell.edu}. Partially supported by NSF grant DMS-1564493.}, 
Anthony Nixon\thanks{School of Mathematics and Statistics, Lancaster University. \texttt{a.nixon@lancaster.ac.uk}.}, and 
Louis Theran\thanks{School of Mathematics and Statistics, University of St Andrews. \texttt{lst6@st-and.ac.uk}}}
\date{}
\maketitle 
\begin{abstract}
We show that universal rigidity of a generic bar-joint framework $(G,\p)$ in $\RR^1$ depends on more than the 
ordering of the vertices.  In particular, we construct examples of $1$-dimensional generic frameworks 
with the same graph and ordering of the vertices, such that one is universally rigid and one is not.
This answers, in the negative, a question of Jordán and Nguyen.  

Underlying our examples are insights about how universal rigidity behaves under 
projections.  Using these ideas, we also give a simple proof that 
universal rigidity is invariant under affine transformations.
\end{abstract}

\section{Introduction}\label{sec:introduction}
Throughout $G = (V,E)$ will be a finite, simple graph with $n$ vertices and $m$ edges;
we usually assume that $V = \{1, 2, \ldots, n\}$ for convenience.
We will consider the rigidity and flexibility properties of realisations of graphs in Euclidean spaces.

\paragraph{Frameworks}
A \defn{framework} is a pair $(G,\p)$ consisting of a 
graph $G$ with $n$ vertices, $m$ edges and a \defn{configuration} $\p = (p(1), \ldots, p(n))$ of $n$ points in $\mathbb{R}^d$.
A $d$-dimensional framework is in \defn{general position} if no $d+1$ points are affinely dependent. More strongly the framework is \defn{generic} if the set of coordinates of  $\p$   form an algebraically independent set over $\mathbb{Q}$.

Two frameworks $(G,\p)$ and $(G,\q)$ are said to be \defn{equivalent} if they have the same edge lengths:
\[
    \|p_i-p_j\|=\|q_i-q_j\| \qquad \text{(for all edges $ij\in E$).}
\]
More strongly, frameworks 
$(G,\p)$ and $(G,\q)$ are \defn{congruent} if all pairwise distances are the same: 
\[
    \|p_i-p_j\|=\|q_i-q_j\| \qquad \text{(for all vertices  $i,j\in V$).}
\]
This is equivalent to requiring that  $\p$   and  $\q$ be related by a Euclidean isometry.

In what follows, we will be interested in frameworks that are 
not affinely spanning. We will say that 
$(G,\p)$ is a \defn{framework in dimension $d$} (or in $\RR^d$) if the affine span of  $\p$   is $d$-dimensional.  
By picking an appropriate basis, of any $\RR^D$ with $D\ge d$, we can identify the affine span of  $\p$   with $\RR^d$.

\paragraph{Flavors of rigidity}
A framework $(G,\p)$ in $\mathbb{R}^d$ is: \defn{rigid} if every equivalent framework in a neighbourhood of  $\p$   in $\mathbb{R}^d$ is congruent to $(G,\p)$; \defn{globally rigid} if every equivalent framework $(G,\q)$ in $\mathbb{R}^d$ is congruent to $(G,\p)$; and \defn{universally rigid} if every equivalent framework $(G,\q)$ in $\mathbb{R}^D$, for any $D\geq d$,  is congruent to $(G,\p)$.

Given a framework, it is a computationally challenging problem to determine if it is rigid, globally rigid or universally rigid. 
The situation improves for generic frameworks. 
In the case of rigidity one may linearise the problem and consider infinitesimal rigidity 
and the rigidity matrix (see \cite{Wlong} for detailed background on these concepts). 
Asimow and Roth \cite{AR} used this to show that rigidity is a \defn{generic property}. 
That is, either every generic framework for a given graph is rigid or 
every generic framework is flexible. It immediately follows that, generically, rigidity is a 
property of the underlying graph.

\paragraph{Equilibrium stresses}
To understand global rigidity, Connelly \cite{C83} developed the stress matrix which we now describe. 
By ordering the $m$ edges of $G$, we can identify $\RR^E$ with $\RR^m$.
An \defn{equilibrium stress} is a vector $\omega \in \mathbb{R}^m$ such that:
\begin{equation}\label{eq: stress}
    \sum_{j\neq i} \omega_{ij} (p_i-p_j)=0
\end{equation}
for all $1\leq i \leq n$, where $\omega_{ij}$ is taken to be equal to zero if $\{i,j\}\notin E$. Equilibrium stresses are 
vectors in the cokernel of the rigidity matrix (see, e.g.,  \cite{Whiteley}).
Given an equilibrium stress $\omega$ for a framework $(G,\p)$ we define the \defn{stress matrix} 
$\Omega(\omega)$ to be the $n\times n$ symmetric
matrix with off-diagonal entries $-\omega_{ij}$ and diagonal entries $\sum_j \omega_{ij}$.  

As a linear map, 
the stress matrix evaluates the l.h.s. of \eqref{eq: stress} for $1$-dimensional configurations. 
The rank of an equilibrium stress is the rank of its stress matrix.
The all ones vector is in the kernel of the stress matrix, since certainly \eqref{eq: stress} is satisfied when 
all the points are the same.  If  $\q$  is a $d$-dimensional configuration,
we say that  $\q$  satisfies $\omega$ if each of its $d$ coordinate projections are in the kernel of $\Omega(\omega)$.
Hence, the rank of any equilibrium stress for a $d$-dimensional framework is at most $n - d - 1$ (but it might be less).


Connelly showed \cite{C05} that every generic framework $(G,\p)$ in $\mathbb{R}^d$ with a stress matrix of rank $n-d-1$ is globally rigid. Subsequently, Gortler, Healy and Thurston \cite{GHT10} showed that the stress matrix being full rank is also a necessary condition for generic global rigidity for all graphs on at least $d+2$ vertices. (A graph on at most $d+1$ generic vertices has no non-zero equilibrium stress and is globally rigid if and only if it is complete.) It follows that global rigidity is also a generic property 
and hence generic global rigidity depends only on the graph.

\paragraph{Universal rigidity and genericity}
The situation for universal rigidity is different. Even in $\RR^1$ it 
is very easy to notice graphs which are universally rigid at some generic 
configurations and not universally rigid at other generic configurations. 
Perhaps the easiest example is the cycle $C_n$, $n\geq 4$. 
Here one may make a universally rigid framework of $C_4$, on vertices $p_1,p_2,p_3,p_4$, 
in $\mathbb{R}^1$ by choosing  $\p$   such that $\|p_1-p_2\|=\|p_2-p_3\|+\|p_3-p_4\|+\|p_4-p_1\|$. 
If instead the positions of $p_1,\dots, p_4$ alternate along the line then it is easy to see that 
the framework will be flexible in $\mathbb{R}^2$. 
A \defn{stretched cycle} is a framework in $\mathbb{R}^1$, where the underlying graph is a 
cycle $p_1,p_2,\dots,p_n$, and  $p_1< p_2 < \dots p_n$.  
The smaller edges are ``stretched" along the longer edge $\{p_1, p_n\}$.  
Figure \ref{fig:stretched} shows a stretched $4$-cycle. 

\begin{figure}[ht]
\centering
\includegraphics[scale=0.5]{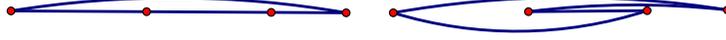}
\captionsetup{labelsep=colon,margin=1.3cm}
\caption{The figure on the left is a \emph{stretched cycle}, while the one the right is  not stretched.  Both are cycles in the line.  Part of the edges are lifted up or down in order to better see the graph.}
\label{fig:stretched}
\end{figure}

\paragraph{Results and organization}
Jord\'{a}n and Nguyen \cite[Question 2.5]{JN15} posed the following question: 
\begin{center}
    {\it 
    Does the  universal  rigidity  of  a  general  position framework $(G, \p)$ in $\mathbb{R}^1$ depend 
    only on the ordering of vertices on the line (and not on the coordinates)? 
    }
\end{center}

The purpose of this short article is to answer their question in the negative. In fact we show the stronger fact that the universal rigidity and the super stability of a \emph{generic} 
framework on the line does not depend solely on the ordering of the vertices. 
We do this by explicitly constructing generic frameworks of the triangular prism graph on the line
that are universally rigid and not universally rigid despite having the same vertex ordering.
Our examples can be extended to an infinite family of graphs for which the ordering does not suffice. 

On the other hand Jord\'{a}n and Nguyen \cite[Theorem 2.2]{JN15} showed that the ordering is sufficient for 
any bipartite graph. Along the way, we show that universal rigidity is 
preserved under orthogonal projections. We can use this result to give a simple 
proof that universal rigidity is affinely invariant.

\section{Universal rigidity}\label{sec:univ}
Let $(G,\p)$ be a framework in $\mathbb{R}^d$.
We say that the edge directions of $(G,\p)$ are \defn{on a conic at infinity} if there is a non-zero 
symmetric $d\times d$ matrix  $\q$ such that, for all edges $i, j \in E$, $p_i-p_j\in C(Q)$, where $C(Q)=\{x\in \mathbb{R}^d: x^tQx=0\}$.
Moreover $(G,\p)$ is \defn{super stable} in $\mathbb{R}^d$ if there exists an equilibrium stress $\omega$ for $(G,\p)$ 
such that $\Omega(\omega)$ is PSD of rank $n-d-1$, where $n$ is the number of vertices of $G$, and the 
edge directions of $(G,\p)$ do not lie on a conic at infinity.
At this point, it's helpful to remind the reader than super-stability implies that 
 $\p$   has $d$-dimensional affine span.

Connelly \cite{C83} proved the following.
\begin{theorem}\label{thm:super}
Let $(G,\p)$ be a super-stable framework in $\mathbb{R}^d$. Then $(G,\p)$ is universally rigid in $\mathbb{R}^d$.
\end{theorem}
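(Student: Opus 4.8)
The plan is to run Connelly's stress-energy argument: use the certifying stress $\omega$ as an energy whose positive semidefiniteness pins every equivalent framework down to an affine copy of $\p$, and then invoke the conic-at-infinity hypothesis to upgrade that affine copy to a congruence. For a configuration $\q=(q_1,\dots,q_n)$ in any $\RR^D$, write $\q^{(1)},\dots,\q^{(D)}\in\RR^n$ for its coordinate vectors and consider the stress energy
\[
    E_\omega(\q)=\sum_{ij\in E}\omega_{ij}\|q_i-q_j\|^2=\sum_{k=1}^D \trans{(\q^{(k)})}\,\Omega(\omega)\,\q^{(k)},
\]
the second equality being exactly the statement that $\Omega(\omega)$ represents the quadratic form $x\mapsto\sum_{ij\in E}\omega_{ij}(x_i-x_j)^2$ on each coordinate. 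Since $\Omega(\omega)$ is PSD we have $E_\omega\ge 0$ everywhere, and since $\omega$ is an equilibrium stress each coordinate vector of $\p$ lies in $\kernel\Omega(\omega)$, so $E_\omega(\p)=0$.

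First I would take any framework $(G,\q)$ in some $\RR^D$ that is equivalent to $(G,\p)$. Term-by-term equality of edge lengths gives $E_\omega(\q)=E_\omega(\p)=0$, and positive semidefiniteness then forces every coordinate vector $\q^{(k)}$ into $\kernel\Omega(\omega)$. Next I would identify this kernel exactly: it has dimension $d+1$ because $\rank\Omega(\omega)=n-d-1$, it contains $\vecone$ and the $d$ coordinate vectors $\p^{(1)},\dots,\p^{(d)}$ of $\p$, and these $d+1$ vectors are linearly independent precisely because $\p$ has $d$-dimensional affine span. Hence they form a basis of $\kernel\Omega(\omega)$, so each $\q^{(k)}$ is a linear combination of $\vecone$ and the $\p^{(l)}$. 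Reading this off coordinate-wise shows $\q$ is an affine image of $\p$: there are a $D\times d$ matrix $B$ and a vector $c\in\RR^D$ with $q_i=Bp_i+c$ for all $i$.

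The final and most delicate step, where the second half of super-stability is used, is upgrading this affine map to a congruence. Equivalence now reads $\|B(p_i-p_j)\|=\|p_i-p_j\|$ for every edge, i.e.
\[
    \trans{(p_i-p_j)}\,(\trans{B}B-\matid)\,(p_i-p_j)=0 \qquad\text{for all }ij\in E.
\]
Thus every edge direction lies on the conic $C(Q)$ for the symmetric matrix $Q=\trans{B}B-\matid$. Since the edge directions of $(G,\p)$ do not lie on a conic at infinity, there is no nonzero symmetric $Q$ with this property, forcing $\trans{B}B=\matid$. Then $B$ is a linear isometry of $\RR^d$ into $\RR^D$ and $q_i=Bp_i+c$ is a Euclidean motion, so $(G,\q)$ is congruent to $(G,\p)$; as $D$ and $\q$ were arbitrary, $(G,\p)$ is universally rigid. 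I expect the main obstacle to be the exact determination of $\kernel\Omega(\omega)$ in the middle paragraph: the energy argument only yields the inclusion of the $\q^{(k)}$ into the kernel, and it is the combination of the sharp rank $n-d-1$ with the full $d$-dimensional affine span of $\p$ that turns ``$\q^{(k)}$ lies in the kernel'' into ``$\q$ is affine in $\p$'', after which the conic hypothesis does the rest.
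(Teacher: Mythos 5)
Your proof is correct and is exactly the standard stress--energy argument behind this theorem; the paper itself states the result as Connelly's and cites \cite{C83} without reproducing a proof, and your argument is the one given there. All three steps --- the energy identity $E_\omega(\q)=\sum_k \trans{(\q^{(k)})}\Omega\,\q^{(k)}$, the identification of $\kernel\Omega(\omega)$ with the span of $\vecone,\p^{(1)},\dots,\p^{(d)}$ via the rank condition and the $d$-dimensional affine span, and the use of the conic-at-infinity hypothesis to force $\trans{B}B=\matid$ --- are sound and in the right order.
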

Notice that super-stability does not need genericity or even general position.  The only 
non-degeneracy requirement is avoiding a conic at infinity.

To set up some notation, we 
define the \defn{configuration matrix}  $\p$   to be the $d$-by-$n$ matrix whose columns consist of 
the coordinates of a configuration of $n$ points in $\mathbb{R}^d$. We also define the 
\defn{augmented configuration matrix}, $\hat P$, to be the $(d+1)$-by-$n$ matrix obtained from  $\p$   by 
adding a row of $1$'s at the bottom.  Given the configuration  $\p$  , and a stress  $\omega$, 
if the rows of $\hat{P}$ span the co-kernel of the associated stress matrix $\Omega$, we say that the 
configuration  $\p$   is \defn{universal} for the stress $\omega$.

A vector $\omega \in \mathbb{R}^m$ is an equilibrium stress for $(G,\p)$ if and only if $\hat{P}\Omega=0$. 
Hence, 
if a universal configuration for $\omega$ has $d$-dimensional affine span, then the cokernel of 
$\Omega$ 
is $(d+1)$-dimensional, and the rank of $\Omega$ is $n-(d+1)$.  Conversely, if the rank of
$\Omega$ is $n-(d+1)$, then any universal configuration  $\p$   for $\omega$ has a  
$d$-dimensional affine span. 

We recall a standard lemma.  
\begin{lemma}\label{affine} Suppose that the framework $(G,\q)$ with $n$ vertices in $\RR^d$ is in 
equilibrium with respect to the stress $\omega$ with corresponding stress matrix $\Omega$, 
and  $\p$   is another configuration for $G$ whose affine span is $\RR^D$ in 
equilibrium with respect to $\omega$ such that the rank of $\Omega$ is $n-D-1$.  
Then  $\q$ is an affine image of  $\p$  .
\end{lemma}
We will also use the following theorem of Connelly. See Figure \ref{fig:Desargues} for an example.

\begin{theorem}[{\cite[Theorem 5]{C83}}] \label{thm:con83}
Let $(G,\p)$ be a framework in $\mathbb{R}^2$, so that  $\p$   is in strictly convex position
and every edge of the convex hull of  $\p$   is an edge of $G$; we call these edges 
boundary edges and the rest of the edges interior.
If $\omega$ is an equlibrium stress of $(G,\p)$ that is positive 
on the boundary edges and negative on the interior edges, then 
the associated stress matrix is PSD of rank $n - 3$.
\end{theorem}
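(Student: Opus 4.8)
The plan is to identify the stress matrix with a quadratic form and to certify its nonnegativity through a convex polyhedral lift of the polygon. For a scalar assignment $\x = (x_1,\dots,x_n) \in \RR^n$ one computes directly from the definition of $\Omega = \Omega(\omega)$ that $\x^\top \Omega \x = \sum_{ij \in E} \omega_{ij}(x_i - x_j)^2$, so that $\Omega$ being PSD is exactly the statement that this form is nonnegative. First I would record the obvious part of the kernel: since $\omega$ is an equilibrium stress we have $\hat P \Omega = 0$, and as $\Omega$ is symmetric the three rows of $\hat P$ — namely $\vecone$ and the two coordinate rows of $\p$ — lie in $\ker \Omega$. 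Strict convexity forces $\p$ to affinely span $\RR^2$, so these three vectors are independent and $\rank \Omega \le n-3$; it then remains to prove both that the form is nonnegative and that its kernel is no larger than this $3$-dimensional space.

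For nonnegativity I would pass to a variational picture. Lifting the configuration into $\RR^3$ by $q_i(t) = (p_i, t\,x_i)$ gives $\|q_i(t) - q_j(t)\|^2 = \|p_i - p_j\|^2 + t^2 (x_i - x_j)^2$, so the stress energy $E_\omega(\q) = \sum_{ij \in E} \omega_{ij}\|q_i - q_j\|^2$ satisfies $E_\omega(\q(t)) = E_\omega(\p) + t^2\, \x^\top \Omega \x$. Thus $\Omega \succeq 0$ is equivalent to $\p$ minimising $E_\omega$ along every vertical lift, and the geometric input that supplies this is the Maxwell--Cremona correspondence: an equilibrium stress on a planar framework is dual to a piecewise-linear lift $z$ of the polygon that is affine on each cell of the arrangement cut out by the edges, with a crease along each edge $ij$ whose size is controlled by $|\omega_{ij}|$ and whose sense of convexity is governed by the sign of $\omega_{ij}$. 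Because $\omega$ is positive on the convex-hull edges and negative on the interior chords, these creases are coherent and (after fixing the overall orientation) assemble into a convex cap over the polygon; nonnegativity of $\x^\top \Omega \x$ is then read off from the convexity of $z$.

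For the rank I would use strict convexity again to rule out any extra kernel. If $\x^\top \Omega \x = 0$ then the corresponding lift carries no convexity defect and must be flat, and since each $p_i$ is a genuine extreme point of a strictly convex polygon (no three collinear), a flat lift is the restriction of a single affine function of the planar coordinates; hence $\x$ lies in the span of the rows of $\hat P$ and $\ker \Omega$ is exactly $3$-dimensional, giving $\rank \Omega = n-3$. The same strict convexity produces at least three distinct edge directions, which already shows the edge directions cannot lie on a conic at infinity — the hypothesis in which this result is later applied.

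The hard part will be making the Maxwell--Cremona step rigorous when the diagonals cross. The interior chords of a convex polygon generally intersect, so the subdivision one lifts over is the full arrangement of the segments rather than $G$ itself, and its crossing points are $4$-valent artificial vertices that carry no genuine stress. I would handle these by checking that equilibrium holds automatically at each crossing — the two chords through it contribute balanced collinear forces — so that $z$ extends across a crossing with no crease and the only creases are the real edges; convexity would then be verified cell by cell from the sign pattern. I expect this bookkeeping, together with the passage from ``the piecewise-linear lift is convex'' to ``the quadratic form $\x^\top\Omega\x$ is PSD with the claimed kernel,'' to be the most delicate part of the argument.
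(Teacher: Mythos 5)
The paper does not actually prove this statement --- it is quoted directly from Connelly \cite[Theorem 5]{C83} --- so there is no in-paper argument to compare yours against; what follows is an assessment of your proposal on its own terms. Your preliminary reductions are correct and standard: the identity $\x^\top\Omega\x=\sum_{ij\in E}\omega_{ij}(x_i-x_j)^2$, the bound $\rank\Omega\le n-3$ coming from $\vecone$ and the two coordinate rows of $\hat P$, the reformulation of positive semidefiniteness as an energy inequality under vertical lifts, and even the bookkeeping at crossings (rescaling the stress on each sub-segment by the ratio of lengths does keep every crossing point in equilibrium). The genuine gap is the single sentence ``nonnegativity of $\x^\top\Omega\x$ is then read off from the convexity of $z$.'' The Maxwell--Cremona lift $z$ is one piecewise-linear function of two variables determined by $(\p,\omega)$ alone, while the quadratic form is evaluated at an arbitrary $\x\in\RR^n$ that plays no role in constructing $z$; no mechanism is offered for converting ``the graph of $z$ is a convex cap'' into ``$\sum\omega_{ij}(x_i-x_j)^2\ge 0$ for every $\x$,'' and this implication is not a citable standard fact --- it is essentially the content of the theorem. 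The rank step inherits the same confusion: there is no ``lift corresponding to $\x$''; what $\x\in\ker\Omega$ actually gives you is that the three-dimensional configuration $(p_i,x_i)$ is again in equilibrium with respect to $\omega$, and you would still have to argue that this forces $\x$ to be affine in $\p$ (you cannot argue termwise, since the summands of $\x^\top\Omega\x$ have mixed signs).

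Here is a concrete reason the argument as written cannot be repaired without a new idea: it never uses the hypothesis that \emph{all} the $p_i$ are in strictly convex position in the semidefiniteness step, only the sign pattern of $\omega$ and the coherence of the creases. But take $K_4$ realised as a triangle $p_1,p_2,p_3$ with $p_4$ in its interior, say at the centroid. The (unique up to scale) equilibrium stress is $\omega_{ij}=c\lambda_i\lambda_j$ where $\lambda=(1,1,1,-3)$ is the affine dependence, and its stress matrix is $\Omega=-c\,\lambda\lambda^\top$. Choosing $c>0$ makes $\omega$ positive on the three convex-hull edges and negative on the three interior edges --- exactly your sign pattern --- and the subdivision of the triangle by the interior point is regular, so a coherent piecewise-linear cap exists; yet $\Omega=-c\,\lambda\lambda^\top$ is \emph{negative} semidefinite. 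So the sign pattern together with the existence of a convex cap does not determine the signature of $\Omega$; strict convex position of every vertex must enter the positivity argument in an essential way, and in your proposal it only enters the rank argument. To complete a proof you need either to supply the missing bridge from the lift to the energy inequality in a way that genuinely exploits convex position (this is delicate), or to replace this step by a different argument, e.g.\ Connelly's original energy/induction argument on convex polygons in \cite{C83}. (Your closing observation that three distinct edge directions rule out a conic at infinity is correct, but it is not part of the statement being proved.)
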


\section{Projections of universally rigid frameworks}
We make a brief digression into projections of frameworks that 
are universally and globally rigid.

\begin{theorem}\label{thm:global}  If $(G,\hat \p)$ is a framework  that is 
globally rigid in $\RR^D$,  then the orthogonal projection 
of $(G,\hat \p)$ onto $(G,\p)$ in $\RR^d$, for any $d\le D$, is globally rigid in $\RR^d$.
\end{theorem}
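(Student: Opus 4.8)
The plan is to prove the statement directly at the level of configurations: starting from an arbitrary framework in $\RR^d$ equivalent to the projection $(G,\p)$, I lift it back to $\RR^D$, apply global rigidity there, and then push the resulting congruence down to $\RR^d$. First I would fix coordinates on $\RR^D$ so that the $d$-dimensional subspace onto which we project is spanned by the first $d$ standard basis vectors; this is harmless, since global rigidity is unchanged by applying a fixed orthogonal transformation of the ambient space (orthogonal maps preserve both equivalence and congruence), and such a rotation carries orthogonal projection onto any $d$-subspace to orthogonal projection onto the coordinate subspace. With this choice, write each point of the given framework as $\hat p_i = (p_i, p_i')$ with $p_i \in \RR^d$ its projection and $p_i' \in \RR^{D-d}$ its orthogonal-complement part, so that $(G,\p)$ is exactly the framework formed by the $p_i$.

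Next, suppose $(G,\q)$ is any framework in $\RR^d$ equivalent to $(G,\p)$, i.e. $\|q_i - q_j\| = \|p_i - p_j\|$ for every edge $ij \in E$. The key step is to lift $\q$ by \emph{re-using} the orthogonal-complement coordinates of $\hat\p$: set $\hat q_i := (q_i, p_i') \in \RR^D$. By the Pythagorean identity, for every edge $ij \in E$,
\[
\|\hat q_i - \hat q_j\|^2 = \|q_i - q_j\|^2 + \|p_i' - p_j'\|^2 = \|p_i - p_j\|^2 + \|p_i' - p_j'\|^2 = \|\hat p_i - \hat p_j\|^2,
\]
so $(G,\hat q)$ is equivalent to $(G,\hat\p)$ in $\RR^D$.

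Now I invoke the hypothesis: since $(G,\hat\p)$ is globally rigid in $\RR^D$, the equivalent framework $(G,\hat q)$ is congruent to it, which means that \emph{all} pairwise distances agree, $\|\hat q_i - \hat q_j\| = \|\hat p_i - \hat p_j\|$ for every pair $i,j \in V$, not merely the edges. Expanding both sides with the same orthogonal splitting, the identical $\|p_i' - p_j'\|^2$ terms cancel, leaving $\|q_i - q_j\| = \|p_i - p_j\|$ for \emph{every} pair $i,j$. Hence $\q$ and $\p$ have all pairwise distances equal, so they are related by a Euclidean isometry of $\RR^d$; that is, $(G,\q)$ is congruent to $(G,\p)$. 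As $\q$ was an arbitrary equivalent framework, $(G,\p)$ is globally rigid in $\RR^d$.

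The argument is short, and there is no serious obstacle once the orthogonal decomposition is in place. The one point that must be handled with care is that the lift $\hat q_i = (q_i, p_i')$ shares its orthogonal-complement component with $\hat p_i$, which is exactly what makes the cross-terms cancel in the Pythagorean expansion. It is equally essential to use that global rigidity in $\RR^D$ delivers \emph{congruence}, i.e. equality of all pairwise distances and not just of the edge lengths, since it is the full distance list that descends to yield congruence in $\RR^d$.
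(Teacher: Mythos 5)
Your proof is correct and follows essentially the same route as the paper: decompose $\hat p_i$ into its projection $p_i$ plus an orthogonal component, lift an equivalent $\q$ by reusing those orthogonal components, and use the Pythagorean identity to transfer equivalence up and congruence down. If anything, you spell out the final descent step (cancelling the $\|p_i'-p_j'\|^2$ terms over \emph{all} pairs to get congruence in $\RR^d$) more explicitly than the paper does.
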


\begin{proof}  Let $h_i$ be the orthogonal projection of $\hat p_i$ into the orthogonal complement of 
$\mathbb{R}^d$ in $\mathbb{R}^D$ so that $\hat p_i=p_i+h_i$, and $h_i$ and $p_i$ are orthogonal for all $i$.  Then computing distances we have
\[
\|\hat p_i -\hat p_j\|=\sqrt{\|p_i-p_j\|^2+\|h_i-h_j\|^2}.
\]
If  $\q$ is another configuration corresponding to  $\p$   in $\mathbb{R}^d$, then define $\hat q_i=q_i+h_i$.  Computing distances again we get 
\[
\|\hat q_i -\hat q_j\|=\sqrt{\|q_i-q_j\|^2+\|h_i-h_j\|^2}.
\]
Then we see that 
$\|\hat p_i -\hat p_j\|=|\hat q_i -\hat q_j\|$ if and only if $\|p_i -p_j\|=\|q_i -q_j\|$.  So $(G,\p)$ is equivalent to $(G,\q)$  if and only if $(G,\hat \p)$ is equivalent to $(G,\hat q)$.  So if $(G,\hat \p)$ is globally rigid in $\mathbb{R}^D$, so is $(G,\p)$ in $\mathbb{R}^d$.
\end{proof}

\begin{corollary}\label{cor:global}  If $(G,\hat \p)$ is a framework  that is 
universally rigid in $\RR^D$,  then the orthogonal projection 
of $(G,\hat \p)$ onto $(G,\p)$ in $\RR^d$, $D\ge d$, is universally rigid in $\RR^d$.
\end{corollary}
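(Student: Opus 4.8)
The plan is to reduce universal rigidity to a family of global rigidity statements, one per admissible ambient dimension, and then feed each of these into Theorem~\ref{thm:global}. The starting observation is that a framework is universally rigid in $\RR^d$ exactly when it is globally rigid in $\RR^{D'}$ for \emph{every} $D'\ge d$: this is just an unpacking of the definition, since universal rigidity quantifies over all $D'\ge d$, and congruence of frameworks is an intrinsic notion that does not depend on the ambient space. Thus it suffices to fix an arbitrary $D'\ge d$ and show that the projected framework $(G,\p)$, regarded as a (non-spanning) framework in $\RR^{D'}$, is globally rigid in $\RR^{D'}$.

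Next I would reuse the decomposition from the proof of Theorem~\ref{thm:global}. Write $\hat p_i = p_i + h_i$, where $p_i$ lies in the $d$-dimensional target $\RR^d$ and $h_i$ lies in its orthogonal complement $W\cong\RR^{D-d}$ inside $\RR^D$. The idea is to build, for the given target dimension $D'$, an auxiliary framework whose projection is $(G,\p)$ in $\RR^{D'}$ but whose global rigidity is still handed to us by the hypothesis. Concretely, I would place a copy of $\hat\p$ in $\RR^{D'}\oplus W = \RR^{D'+(D-d)}$ by setting $\hat p_i' = p_i \oplus h_i$, using that $p_i\in\RR^d\subseteq\RR^{D'}$. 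Since $\|\hat p_i' - \hat p_j'\|^2 = \|p_i-p_j\|^2 + \|h_i-h_j\|^2 = \|\hat p_i-\hat p_j\|^2$ for \emph{all} pairs $i,j$, the framework $(G,\hat\p')$ is congruent to $(G,\hat\p)$ and therefore inherits universal rigidity in $\RR^D$; because $D'+(D-d)\ge D$, it is in particular globally rigid in $\RR^{D'+(D-d)}$.

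Finally I would apply Theorem~\ref{thm:global} to $(G,\hat\p')$ with ambient dimension $D'+(D-d)$ and target dimension $D'$: the orthogonal projection that forgets the $W$-coordinates sends $\hat p_i' = p_i\oplus h_i$ to $p_i$, so the projected framework is exactly $(G,\p)$ in $\RR^{D'}$, and the theorem yields that it is globally rigid in $\RR^{D'}$. As $D'\ge d$ was arbitrary, $(G,\p)$ is globally rigid in every admissible dimension, which is precisely universal rigidity in $\RR^d$.

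The step that needs the most care is the dimension bookkeeping in the middle paragraph: one cannot simply invoke Theorem~\ref{thm:global} a single time, because universal rigidity of the projection demands global rigidity in \emph{arbitrarily large} ambient dimensions $D'$, whereas the hypothesis only supplies global rigidity of $\hat\p$ starting from dimension $D$. The device that resolves this is to enlarge the ambient space to $\RR^{D'+(D-d)}$ before projecting, so that the heights $h_i$ can be re-attached orthogonally to an ambient $\RR^{D'}$ while keeping the congruent copy inside a dimension where the hypothesis still applies; everything else reduces to the distance computation already performed in the proof of Theorem~\ref{thm:global}.
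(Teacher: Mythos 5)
Your proof is correct and follows essentially the same route as the paper: both unpack universal rigidity of the projection as global rigidity in every ambient dimension $D'\ge d$, enlarge the ambient space so that the heights $h_i$ can be reattached orthogonally to a larger target (the paper phrases this as applying Theorem~\ref{thm:global} to $\RR^D\times\RR^k\to\RR^d\times\RR^k$ for $k$ large, which is your $\RR^{D'}\oplus W\to\RR^{D'}$ up to a permutation of coordinates), and then invoke Theorem~\ref{thm:global} once per target dimension. Your version just spells out the congruence and dimension bookkeeping that the paper leaves implicit.
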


\begin{proof}  Universal rigidity of $(G,\p)$ in $\RR^d$ is just global rigidity applied to $(G,\p)$ in $\RR^D$ for $D$ large.  So apply Theorem \ref{thm:global} to $\RR^D \times \RR^k \rightarrow \RR^d \times \RR^k$, for $k$ large extending the projection $\RR^D \rightarrow \RR^d $.
\end{proof}
\vspace{0.25cm}

\noindent Several remarks are in order.
\vspace{0.25cm}

\noindent {\bf 1.} If  the projection $(G,\p)$ is universally rigid in $\RR^d$ it is not necessarily true that $(G,\hat \p)$ is globally rigid in $\RR^D$.  
For example a quadrilateral in the plane can project to a stretched cycle in the line, which is super stable and universally 
rigid, while the quadrilateral is not even rigid in the plane.\\

\noindent{\bf 2.}  Figure \ref{difficient-stress.fig} shows the orthogonal projection of a super stable framework in $\RR^3$  
into the plane that only has a one-dimensional stress coming from the stress in $\RR^3$, since each vertex is of degree $3$, 
all the $3$ edges at each vertex have only one linear relation among them as vectors, and the edges are connected. 
So the stress in any one edge determines the stress in all the other edges.   Thus, in the plane, the rank of its 
stress matrix will be $8-3-1=4$, which is  not  maximal.   By Corollary \ref{cor:global} it is universally rigid, but 
not super stable.  See \cite{iterative} for a general discussion of universally rigid but not super stable frameworks.\\

\begin{figure}[ht]
\centering 
\includegraphics[scale=0.3]{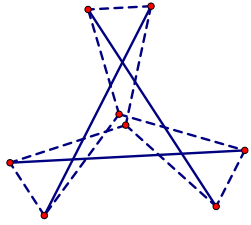}
\captionsetup{labelsep=colon,margin=1.3cm}
\caption{This is a framework in $\RR^2$ that is the orthogonal projection from a super stable framework in $\RR^3$, but is not itself super stable.  See \cite{Combining, C83}.  The dashed edges have a positive stress, and the solid edges have a negative stress. One $Y$-shaped set of dashed edges are $30^{\circ}$ apart from the other.  The two center vertices have been moved slightly to see that they are distinct. }
\label{difficient-stress.fig}
\end{figure}

\noindent {\bf 3.} Corollaries of Theorem \ref{thm:global} can be extended to other flavors of rigidity, such as local or infinitesimal 
rigidity.  

\noindent {\bf 4.}
We also give a short proof that universal rigidity is preserved under affine 
transformations, even singular affine transformations.  Plainly translation preserves universal rigidity.  Now suppose that $(G,\p)$ is a universally rigid 
framework in $\RR^d$ and $A$ is a linear transformation on $\RR^d$.  
Using the singular value decomposition we can write $A$ as $UDV$ where $U$ and $V$ are orthogonal transformations and $D$ is a diagonal 
matrix with non-negative entries on the diagonal.

For the moment, assume that $D$ has all entries in $[0,1]$.  By embedding $(G,\p)$ in $\RR^D$, for sufficiently large $D$,
we can model the action of $D$ by an orthogonal projection onto an appropriately chosen $d$-dimensional
subspace $X$ of $\RR^d$.  If $D$ has entries that are greater than one, we first scale  $\p$   and then 
use the same argument.

Finally, we model the map $A$ by a product of maps $\tilde{U}P\tilde{V}$ where $\tilde{U}$ is orthogonal and 
acts as $U$ on $\RR^d$,  $\p$   is the scaling and projection described above, and $\tilde{V}$ is orthogonal and 
acts as $V$ on $X$.  By Theorem \ref{thm:global} each of these preserves universal rigidity.\\

\noindent {\bf 5.} A related fact is that, by \cite[Corollary 4.8]{CGT17}, super stability is invariant with respect to 
invertible projective 
transformations in $\RR^d$ that do not send any vertices to infinity. 
By contrast, we know that invertible projective 
transformations do \emph{not} always preserve universal rigidity.  
See \cite[Figure 8 and Figure 9]{CGT17}, which is closely related to Figure \ref{fig:1} below.\\

\noindent {\bf 6.} Corollary \ref{cor:global} and the previous remarks give us a test for universal rigidity.  
Suppose that $(G,\p)$ is a $d$-dimensional framework that has a PSD equilibrium stress $\omega$  of rank $n - D -1$, 
where $D\ge d$.  If a $D$-dimensional framework 
$(G,\hat \p)$ satisfies $\omega$ and its edges are not on a conic at infinity, then $(G,\p)$ is universally rigid, 
since it is an affine image of a super stable framework. A necessary and sufficient test is given in 
\cite{iterative}, but this one is simpler to apply.

\section{The triangular prism}\label{sec:prism}

Here we show that universal rigidity on the line, even when the configuration coordinates are generic do not only depend on the ordering of the vertices. This answers Question 2.5 of \cite{JN15} negatively. The fundamental example is the \defn{triangular prism}, the cartesian product of $K_3$ and $K_2$ (depicted in Figure \ref{fig:1}), which we denote by $T$.

We will create a PSD equilibrium stress that shows the universal rigidity of the bottom framework of Figure \ref{fig:1}, and yet the top framework of Figure \ref{fig:1} is flexible in the plane.  (See Figure 9 in \cite{iterative}, where it is shown that any flex of the top framework must have a $2$-dimensional affine span.)

\begin{figure}[ht]
\centering
\includegraphics[scale=0.5]{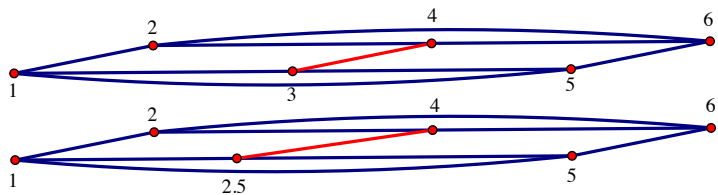}
\captionsetup{labelsep=colon,margin=1.3cm}
\caption{The whole figure is in a line but it is displaced slightly up to see the connections, and the curved arc indicates certain pairs of points that are connected.  The top figure is flexible in the plane as a $3$-rung ladder, whilst the bottom figure is (universally) rigid, that is, it cannot open up.  The ordering of the vertices of both examples in the line are the same.}
\label{fig:1}
\end{figure}


\paragraph{Universally rigid example: first proof}
We begin with the framework $(G,\hat \p)$ in $\mathbb{R}^2$ shown in Figure \ref{fig:Desargues}, which is super-stable by 
Theorem \ref{thm:con83} and hence has a PSD equilibrium stress $\hat{\omega}$ of rank $n - 3$. We project it orthogonally onto the line to a framework $(G,\p)$.  On the line any one of the flat triangles in $(G,\p)$ has a PSD equilibrium stress $\omega$. Plainly $\omega + \hat{\omega}$ is a PSD equilibrium stress and it is satisfied by $(G,\p)$.

Let  $\q$ be any configuration that is universal for 
$\omega + \hat{\omega}$.  By Lemma \ref{affine},  $\q$ is an affine image of $\hat \p$. The affine map sending $\hat \p$ to  $\q$ is determined by its action on the triangle, and we conclude that  $\q$ has $d$-dimensional affine span. Since the edges of $(G,\p)$ are not on a conic, we conclude that the framework $(G,\p)$ is super-stable.  We do not yet know that it is generic.  However, since the projection is infinitesimally rigid, a small perturbation will also be super-stable and generic (see \cite{CGT17}), with the vertices in the same order.

\paragraph{Universally rigid example: second proof}
Our second analysis of the universally rigid example uses affine invariance of universal rigidity.
Starting from the super-stable framework  in Figure \ref{fig:Desargues}, we have a $6$-dimensional 
space of nearby convex frameworks that are also super-stable.  We freely specify the two left 
vertices of the quadrilateral and the bottom right one; this fixes the position of the 
top right one.  Then we freely place the bottom vertex, which leaves $1$ degree of freedom for the top 
one.  Removing isometies gives us $6$ dimensions.

Hence, an affine projection onto a generic line, such as a small wiggle of the $x$-axis is 
generic as a configuration of points in $\RR^1$.  As discussed in the previous section, this 
projection is universally rigid.  Since it is generic, results of \cite{Gortler-Thurston-universal}
tell us it is, in fact, super-stable.


\begin{figure}[ht]
\centering 
\includegraphics[scale=0.3]{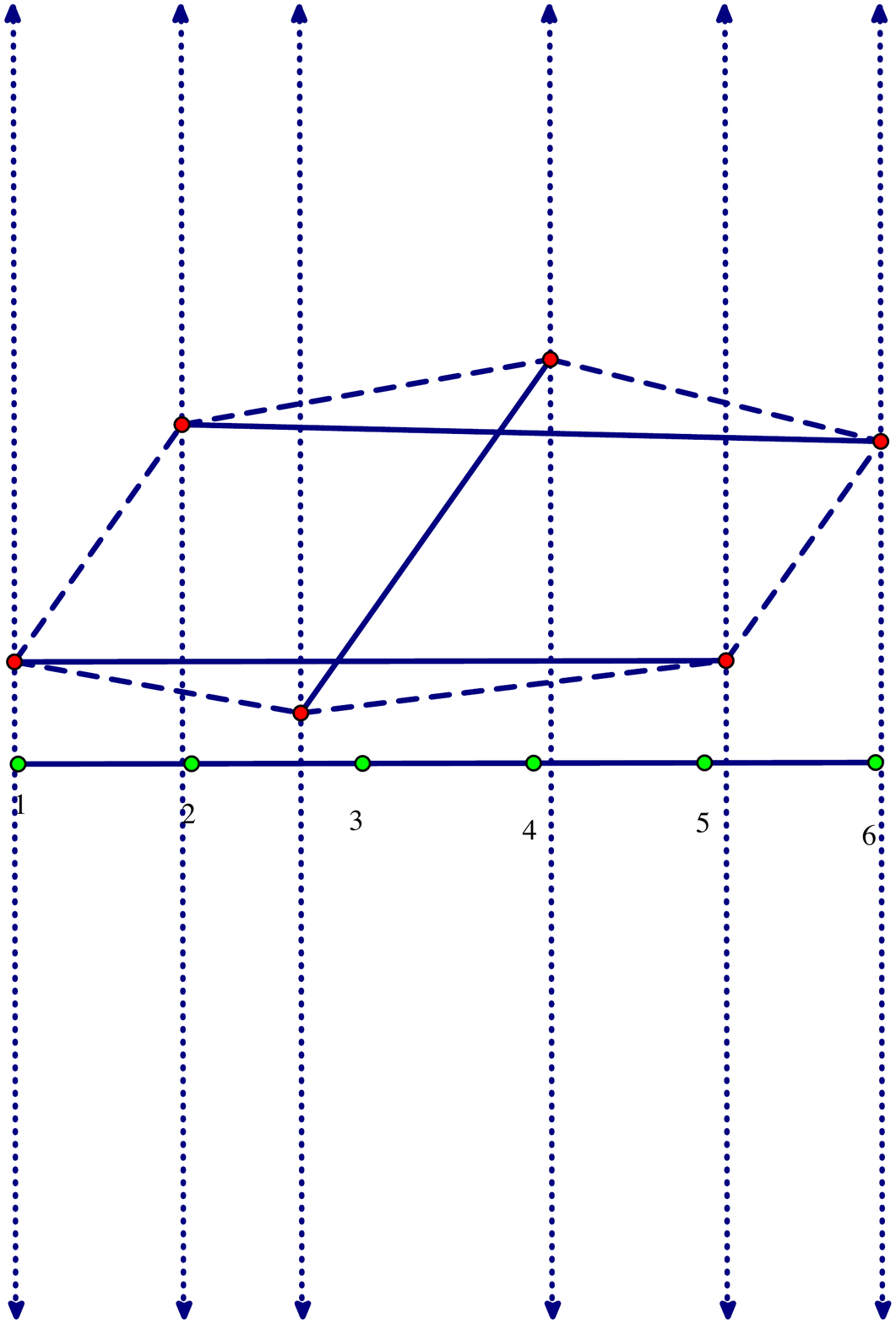}
\captionsetup{labelsep=colon,margin=1.3cm}
\caption{This is a two-dimensional configuration that projects onto the bottom framework of Figure \ref{fig:1}. Dashed lines indicate positive stress (cables) and solid lines indicate negative stress (struts).}
\label{fig:Desargues}
\end{figure}

\begin{figure}[h]
\centering
\includegraphics[scale=0.35]{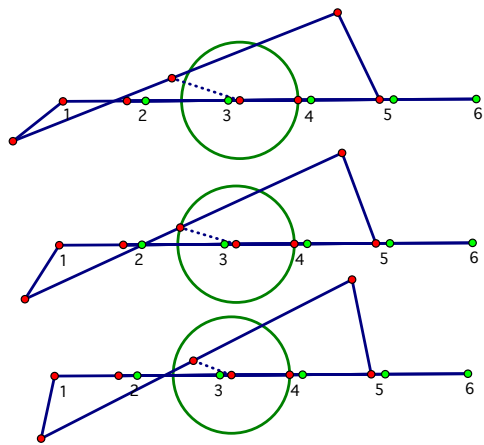}
\captionsetup{labelsep=colon,margin=1.3cm}
\caption{This shows another configuration of the graph in Figure \ref{fig:1} initially on the line, but this configuration has another realization in plane.  Removing the middle edge between the vertices near positions $3$ and $4$ regain their original lengths during the flex as you can see in the middle figure.}
\label{fig:3}
\end{figure}

\paragraph{Universally flexible example}
It is also true at some other configurations of the same graph as in Figures \ref{fig:Desargues} and \ref{fig:1}, that they are not even globally rigid in the plane, let alone universally rigid.  Figure \ref{fig:3} shows that. When the bar joining the vertices close to the points $2$ and $3$ on the line is removed, then the quadrilateral remaining flexes in the plane. The green points on the line are exactly at the integer points $1,2,3,4,5,6$, while the red points on the line are displaced slightly as shown.  Points $1$ and $6$ are not displaced.  The red points on the line and in the three given configurations are shown and accurate.
The green points serve as a ruler, and it is clear in both Figures \ref{fig:Desargues} and \ref{fig:3} that they can be realized at generic configurations in the line.
One can see from the top of Figure \ref{fig:3} that the vertex near $4$ lies outside the circle centered at the point near $3$ with radius equal to the distance between the vertices near $3$ and $4$.  In the bottom of  Figure  \ref{fig:3} it is inside that circle, so at some point it must be on the circle, and that is the alternate realization of the whole ladder framework.

\bigskip

\noindent \textbf{Further Remark:}

If one is happy just to show that frameworks, such as the lower framework of Figure \ref{fig:3}, are universally rigid, it follows quickly from an example, Figure 8, of the ``orchard ladder" in \cite{iterative}. There it is shown that the orchard ladder is universally rigid in the plane.  By Corollary \ref{cor:global}, it is universally rigid in the line.  Figure \ref{fig:orchard} shows such an example.

\begin{figure}[h]
\centering
\includegraphics[scale=0.35]{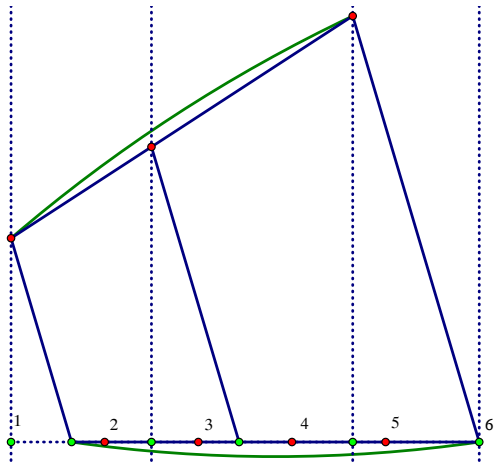}
\captionsetup{labelsep=colon,margin=1.3cm}
\caption{ In this example of an orchard ladder, the rungs are parallel, but just to be universally rigid, it is only necessary that the lines (corresponding to $1,2:3,4:5,6)$ through the three rungs meet at a point. The green points are projections into the line.}
\label{fig:orchard}
\end{figure}

\section{Final questions}


One may also ask: does a universally rigid framework in the line only come from some overlapping stretched cycles, even if some other stresses are needed?

A related question:  If a framework in the line is universally rigid, is it necessarily super stable?  We have no counterexamples.\\

\textbf{Acknowledgement.} We are grateful to the International Centre for Mathematical Sciences who provided partial financial support through a Research in Groups grant.

\bibliographystyle{abbrvnat}
\bibliography{icms.bib}
\end{document}